\newcommand{\excise}[1]{}
\newcommand{\isom}{\cong}
\renewcommand{\phi}{\varphi}
\renewcommand{\bar}{\overline}
\renewcommand{\AA}{\mathbb{A}}
\newcommand{\PP}{\mathbb{P}}
\newcommand{\lieg}{\mathfrak{g}}
\newcommand{\liet}{\mathfrak{t}}
\newcommand{\Qq}{\mathcal{Q}}       % quadric
\DeclareMathOperator{\Spec}{Spec}
\DeclareMathOperator{\Sym}{S}
\DeclareMathOperator{\GL}{GL}
\DeclareMathOperator{\ad}{ad}
\DeclareMathOperator{\Char}{char}
\newtheorem{theorem}{Theorem}
\newtheorem{lemma}[theorem]{Lemma}
\newtheorem{proposition}[theorem]{Proposition}
\theoremstyle{definition}
\newtheorem{question}[theorem]{Question}
\begin{document}
%%%%%%%%%%%%%%%%%%%%%%%%%%%%

%%%%%%%%%%%%%%%%%%%%%%%%%%%%%%%%%%%%%%%%%%%%%%%%%%%%%%%%%%
\title{The Lie algebra of type $G_2$ is rational over its quotient by the adjoint action}
\author{Dave Anderson}
\address{Instituto Nacional de Matem\'atica Pura e Aplicada, Rio de Janeiro, RJ 22460-320 Brasil}
\email{dave@impa.br}
\author{Mathieu Florence}
\address{Institut de Math\'ematiques de Jussieu, Universit\'e Paris 6,
place Jussieu, 75005 Paris, France}
\email{florence@math.jussieu.fr}
\author{Zinovy Reichstein}
\address{Department of Mathematics\\University of British Columbia\\ BC, Canada V6T 1Z2}
\email{reichst@math.ubc.ca}
\thanks{D.A. was partially supported by NSF Grant DMS-0902967.
Z.R. was partially supported by
National Sciences and Engineering Research Council of
Canada grant No. 250217-2012.}

\begin{abstract} 
Let $G$ be a split simple group of type $G_2$ over a field $k$, and let $\lieg$ be its Lie algebra.  Answering a question of J.-L.~Colliot-Th\'el\`ene, B.~Kunyavski\u\i, V.~L.~Popov, and Z.~Reichstein, we show that the function field $k(\lieg)$ is generated by algebraically independent elements over the field of adjoint invariants $k(\lieg)^G$.
\end{abstract}

\maketitle

\vspace{-.5in}

\begin{otherlanguage}{french}
\begin{abstract} 
Soit $G$ un groupe alg\'ebrique  simple et d\'eploy\'e de type $G_2$ sur un corps $k$. Soit $\lieg$ son alg\`ebre de Lie.  On d\'emontre que le corps des fonctions $k(\lieg)$ est transcendant pur sur le corps $k(\lieg)^G$ des invariants adjoints. Ceci r\'epond par l'affirmative \`a une question pos\'ee par J.-L.~Colliot-Th\'el\`ene, B.~Kunyavski\u\i, V.~L.~Popov et Z.~Reichstein.
\end{abstract}
\end{otherlanguage}

\medskip

{\bf I. Introduction.} Let $G$ be  a split connected reductive group 
over a field $k$ and let $\lieg$ be the Lie algebra of $G$. 
We will be interested in the following natural question:

\begin{question} \label{q1}
Is the function field $k(\lieg)$ \emph{purely transcendental} over the field of invariants $k(\lieg)^G$
for the adjoint action of $G$ on $\lieg$? That is, can $k(\lieg)$ be generated over $k(\lieg)^G$ 
by algebraically independent elements?
\end{question}

In \cite{trans}, the authors reduce this question to the case where $G$ is simple, and
show that in the case of simple groups, the answer is affirmative for split groups of types $A_n$ and $C_n$, and negative for all other types except possibly for $G_2$.  The standing assumption in \cite{trans} is that $\Char(k) = 0$, 
but here we work in arbitrary characteristic.

The purpose of this note is to settle Question~\ref{q1} for the remaining case $G=G_2$.

\begin{theorem}\label{t:main}
Let $k$ be an arbitrary field and $G$ be the simple split $k$-group 
of type $G_2$.  Then $k(\lieg)$ is purely transcendental over $k(\lieg)^G$.
\end{theorem}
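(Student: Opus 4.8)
The plan is to exhibit an explicit birational section of the quotient map $\lieg \to \lieg/\!/G = \Spec k[\lieg]^G$, realizing $\lieg$ generically as affine space over the base.  Since $G$ has rank $2$, the ring of invariants $k[\lieg]^G$ is a polynomial ring in two generators (the degree-$2$ and degree-$6$ fundamental invariants coming from the Chevalley restriction theorem, at least away from small characteristic), so $k(\lieg)^G$ has transcendence degree $2$ over $k$, while $\lieg$ has dimension $14$.  Thus I expect to produce a purely transcendental extension of transcendence degree $12$, and the whole content is to find a convenient rational slice on which the generic fiber of the quotient map becomes manifestly rational.

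\medskip

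First I would pick a good model for the $7$-dimensional representation $V$ of $G=G_2$ and realize $\lieg \subset \mathfrak{so}(V) = \exterior^2 V$ as the subalgebra preserving the defining alternating trilinear (octonionic) form.  The idea is to choose a generic element $x \in \lieg$, use the adjoint action to move it into a normal form, and read off the stabilizer.  Concretely, I would fix a Cartan subalgebra $\liet$ and a generic line in it, and analyze the action of the centralizer.  The key structural input is that for a generic semisimple $x$, the centralizer $Z_G(x)$ is a maximal torus $\Tgen$, and the adjoint orbit through the Kostant-type section is an affine space; the quotient $\lieg \to \lieg/\!/G$ restricted to a suitable affine slice should be a torsor under (a dense orbit of) the Borel or torus, whose total space I can trivialize rationally.

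\medskip

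The cleanest route, which I would try first, is the no-name lemma / stratification-by-stabilizer approach used in \cite{trans} for types $A_n$ and $C_n$.  Generically the stabilizer of a point of $\lieg$ for the adjoint $G$-action is the maximal torus $\Tgen \cong \GG_m^2$, so on a dense open $G$-invariant subset $U \subseteq \lieg$ the action is a torsor (or at least has a geometric quotient with $\Tgen$-orbits), and I would seek a $\Tgen$-linearized vector bundle structure to which the no-name lemma applies.  Because $\Tgen$ is a split torus, every generically free linear $\Tgen$-action has rational quotient with the field extension purely transcendental, provided I can present $U$ (birationally) as the total space of a $\Tgen$-representation over its quotient.  Producing this $\Tgen$-equivariant linear structure explicitly, in a way that is uniform in $\Char(k)$ and handles the bad primes $2,3$ (where the Killing form degenerates and the invariant theory of $G_2$ is subtler), is the step I expect to be the main obstacle; the rank-$2$ octonionic description of $G_2$ should make the relevant slice small enough to analyze by hand, but the characteristic-free bookkeeping is where the real work lies.
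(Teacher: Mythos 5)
Your framing via a Kostant-type section correctly identifies what must be proved: the generic fiber of $\lieg \to \lieg/\!/G$ is $G_K/Z$, where $K = k(\lieg)^G$ and $Z = Z_{G_K}(x)$ is the centralizer of the section point $x$. But the step where your argument collapses is the identification of $Z$ (equivalently, of the generic stabilizer) with the \emph{split} torus $\Tgen \cong \GG_m^2$. It is not split: $Z$ is the ``generic torus'' of $G$, a form of $T$ split only over $L = k(\liet)$, i.e.\ the twist of $T_K$ by the $W$-torsor $\Spec(L) \to \Spec(K)$, where $K = k(\liet)^W$. Consequently none of the split-torus machinery (Hilbert 90, birational triviality of torsors under split tori, the no-name lemma) applies. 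Worse, the no-name lemma, applied in the only way it fits here --- to the $W$-action on $G/T \times \liet$, viewing $\liet$ as a $W$-representation over the base $G/T$ --- proves that $k(\lieg) \cong k(G/T\times\liet)^W$ is purely transcendental over $k(G/T)^W$, which is the \emph{wrong base field}: the theorem requires pure transcendence over $k(\liet)^W = k(\lieg)^G$, and these two subfields are quite different. There is also a decisive sanity check: your proposed mechanism uses nothing about $G_2$ beyond the facts that generic adjoint stabilizers are maximal tori and the rank is small, both of which hold for every split semisimple group; so if the argument worked it would prove the statement for types $B_n$, $D_n$, $E_6$, $E_7$, $E_8$, $F_4$ as well, contradicting the negative results of \cite{trans}. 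Any correct proof must exploit arithmetic-geometric input special to $G_2$, and your proposal contains none. (A similar caveat applies to your reading of \cite{trans}: the positive cases $A_n$, $C_n$ are not handled there by a stratification/no-name argument, but by the associative and symplectic algebra structures specific to those types.)

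For comparison, the paper's proof isolates exactly this twisted-form difficulty and resolves it with three $G_2$-specific ingredients. First (Proposition~\ref{prop.red1}, valid for any split semisimple $G$), the map $(\bar a, t)\mapsto \mathrm{Ad}(a)t$ identifies $k(\lieg)/k(\lieg)^G$ with $K({}^\zeta(G_K/T_K))/K$, reducing everything to rationality of the twisted variety ${}^\zeta(G_K/T_K)$. Second, for $G_2$ the parabolic $P \subset G$ stabilizing an isotropic line in the $7$-dimensional representation is \emph{special} and $k$-rational (Lemma~\ref{l:special}); since $P$ acts generically freely on $G/T$ with rational quotient a $W$-model $Y$ of $k(\Qq)^T$ ($\Qq = P\backslash G$ the five-dimensional quadric), twisting and taking a rational section of the resulting $P$-torsor shows ${}^\zeta(G/T)$ is birational to $P \times {}^\zeta Y$, so rationality of ${}^\zeta Y$ suffices. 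Third, an explicit computation exhibits $Y$ as a smooth quadric hypersurface $\Lambda_4 \subset \PP^4$ with $W$ acting linearly and fixing the degree-$3$ zero-cycle $(1{:}0{:}0{:}0{:}0)+(0{:}1{:}0{:}0{:}0)+(0{:}0{:}1{:}0{:}0)$; by Hilbert 90 the twist ${}^\zeta\Lambda_4$ is again a smooth quadric in $\PP^4$ carrying a zero-cycle of odd degree, so Springer's theorem yields a $k$-point and hence $k$-rationality. The role played by Springer's theorem --- converting the odd-degree invariant cycle into a rational point on the twisted quadric --- is precisely the arithmetic input your proposal lacks, and it is what distinguishes $G_2$ from the types where the answer is negative.
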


\noindent
Under the same hypothesis, and also assuming $\Char(k)=0$, it follows from Theorem~\ref{t:main} and \cite[Theorem~4.10]{trans} that the field extension $k(G)/k(G)^G$ is also purely transcendental, where $G$ acts on itself by conjugation.

Apart from settling the last case left open in~\cite{trans}, we 
were motivated by the
(still mysterious) connection between Question~\ref{q1} and 
the Gelfand-Kirillov (GK) conjecture~\cite{gk}. In this context $\Char(k) = 0$.
A.~Premet~\cite{premet} recently showed that the GK conjecture fails for simple 
Lie algebras of any type other than $A_n$, $C_n$ and $G_2$. 
His paper relies on the negative results of~\cite{trans} and 
their characteristic $p$ analogues (\cite{premet}, 
see also \cite[Theorem 6.3]{trans}).
It is not known whether a positive answer 
to Question~\ref{q1} for $\lieg$ implies the GK conjecture 
for $\lieg$. The GK conjecture has been proved for algebras 
of type $A_n$ (see~\cite{gk}), but remains open for types 
$C_n$ and $G_2$.  While Theorem~\ref{t:main} does not settle 
the GK conjecture for type $G_2$, it puts the remaining two open 
cases---for algebras of type $C_n$ and $G_2$---on equal 
footing vis-\`a-vis Question~\ref{q1}.

%%%%%%%%%%%%%%%%%%%%%%%
\medskip
\noindent
{\bf II. Twisting.}  Temporarily, let $W$ be a linear algebraic group over 
a field $k$. (In the sequel, $W$ will be the Weyl group of $G$; in particular, 
it will be finite and smooth.)
We refer to \cite[Section 3]{dr}, \cite[Section 2]{florence}, 
or~\cite[Section 2]{trans} for details about the following facts.

Let $X$ be a quasi-projective variety with a (right) $W$-action defined over $k$, 
and let $\zeta$ be a (left) $W$-torsor over $k$.  The diagonal left action of $W$ on $X \times_{\Spec(k)} \zeta$ (by $g.(x,z)=(xg^{-1}, gz)$) makes 
$X \times_{\Spec(k)} \zeta$ into the total space of a $W$-torsor $X \times_{\Spec(k)} \zeta \to B$.  The base space $B$ of this torsor is usually called the 
{\em twist} of $X$ by $\zeta$.  We denote it by ${ }^\zeta X$. 

It is easy to see that if $\zeta$ is trivial then ${ }^\zeta X$ is 
$k$-isomorphic to $X$.  Hence, ${ }^\zeta X$ is 
a $k$-form of $X$, i.e., $X$ and ${ }^\zeta X$ become isomorphic 
over an algebraic closure of $k$.

The twisting construction is functorial in $X$: a $W$-equivariant morphism 
$X \to Y$ (or rational map $X\dasharrow Y$) induces a $k$-morphism ${ }^\zeta X \to { }^\zeta Y$  (resp., rational map ${ }^\zeta X \dasharrow { }^\zeta Y$).  

%%%%%%%%%%%%%%%%%%%%%%%
\medskip
\noindent
{\bf III. The split group of type $G_2$.}  We fix notation and briefly review the basic facts, referring to \cite{sv}, \cite{g2thesis}, or \cite{g2chern} for more details.  Over any field $k$, a simple split group $G$ of type $G_2$ has a faithful seven-dimensional representation $V$.  Following \cite[(3.11)]{g2chern}, one can fix a basis $f_1,\ldots,f_7$, with dual basis $X_1,\ldots,X_7$, so that $G$ preserves the nonsingular quadratic norm $N=X_1 X_7 + X_2 X_6 + X_3 X_5 + X_4^2$.  (See \cite[\S6.1]{g2thesis} for the case $\Char(k)=2$.  In this case $V$ is not irreducible, since the subspace spanned by $f_4$ is invariant; the quotient $V/(k\cdot f_4)$ is the minimal irreducible representation.  However, irreducibility will not be necessary in our context.)  
The corresponding embedding $G \hookrightarrow  \GL_7$ yields a split maximal torus and Borel subgroup $T \subset B \subset G$, by intersecting with diagonal and upper-triangular matrices.  Explicitly, the maximal torus is 
\begin{equation} \label{e.T}
T = \mathrm{diag}(t_1,\,t_2,\,t_1t_2^{-1},\,1,\,t_1^{-1}t_2,\,t_2^{-1},\,t_1^{-1}); 
\end{equation}
cf.~\cite[Lemma~3.13]{g2chern}.

The Weyl group $W = N(T)/T$ is isomorphic to the dihedral group with $12$ elements, and the surjection $N(T) \to W$ splits.  The inclusion $G \hookrightarrow \GL_7$ thus gives rise to an inclusion $N(T)= T \rtimes W \hookrightarrow D \rtimes \Sym_7$, where $D \subset \GL_7$ is the subgroup of diagonal matrices. On the level of the dual basis $X_1,\ldots,X_7$, we obtain an isomorphism $W\isom \Sym_3\times \Sym_2$ realized as follows:
$\Sym_3$ permutes the three ordered pairs $(X_1,X_7)$, $(X_2,X_6)$ and $(X_3,X_5)$, 
and $\Sym_2$ exchanges the two ordered triples $(X_1,X_5,X_6)$ and $(X_7,X_3,X_2)$.  The variable $X_4$ is fixed by $W$.  For details, see \cite[\S A.3]{g2chern}.

The subgroup $P\subset G$ stabilizing the isotropic line spanned by $f_1$ is a maximal standard parabolic, and the corresponding homogeneous space $P\backslash G$ is isomorphic to the five-dimensional quadric $\Qq \subset \PP(V)$ defined by the vanishing of the norm, i.e., by the equation
\begin{equation}\label{e.Q}
 X_1 X_7 + X_2 X_6 + X_3 X_5 + X_4^2=0.
\end{equation}
Note that the quadric  $\Qq$ is endowed with an action of $T$. 
An easy tangent space computation shows that 
$P$ is smooth regardless of the characteristic of $k$.

\begin{lemma}\label{l:special}
The group $P$ is {\em special}, i.e., $H^1(l, P) = \{ 1 \}$ for every field extension $l/k$. Moreover, $P$ is rational, as a variety over $k$.
\end{lemma}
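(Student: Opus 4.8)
The plan is to use the Levi decomposition of $P$ and to reduce both assertions to standard facts about $\GL_2$ and split unipotent groups. Since $G$ is split and $P \subset G$ is a parabolic subgroup, Borel--Tits theory provides, in every characteristic, a Levi decomposition $P = L \ltimes U$, where $U = R_u(P)$ is the unipotent radical and $L$ is a Levi subgroup containing $T$. As a $k$-variety, $P \cong U \times L$ via the multiplication map, so it suffices to treat the two factors separately.

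The unipotent radical $U$ is the product $\prod_\gamma U_\gamma$ of the root subgroups attached to the positive roots $\gamma$ not lying in $L$, and is therefore a split unipotent group. In particular $U \cong \AA^5$ as a $k$-variety, so it is rational, and it admits a filtration whose successive quotients are isomorphic to $\GG_a$. Since $H^1(l, \GG_a) = 0$ for every field extension $l/k$, a standard d\'evissage gives $H^1(l, U) = \{1\}$, so $U$ is special.

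For the Levi factor the crux is to identify $L$. Here $P$ is the stabilizer of the highest weight line $[f_1] \subset V$, whose weight is the fundamental weight $\omega_1$ attached to the short simple root $\alpha$; consequently $L$ is generated by $T$ together with the root groups $U_{\pm\beta}$ of the long simple root $\beta$, so that its derived group $L'$ is the $\mathrm{SL}_2$ attached to $\beta$. Using $\langle \beta, \beta^\vee \rangle = 2$ one checks that $\beta$ restricts to twice a generator of the character group of the one-dimensional torus $\beta^\vee(\GG_m)$, which shows that $L'$ is $\mathrm{SL}_2$ and not $\mathrm{PGL}_2$. Combined with the one-dimensional central torus $Z(L)^\circ$, this exhibits $L$ as a connected reductive group with derived group $\mathrm{SL}_2$ and one-dimensional radical, hence isomorphic either to $\GL_2$ or to $\mathrm{SL}_2 \times \GG_m$. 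In either case $L$ is special, since $H^1(l, \GL_2) = H^1(l, \mathrm{SL}_2) = H^1(l, \GG_m) = \{1\}$ by Hilbert 90, and $L$ is rational, being an open subvariety of an affine space.

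It remains to assemble the pieces. Rationality of $P$ is immediate from $P \cong U \times L$ together with the rationality of both factors. For specialness, the split exact sequence $1 \to U \to P \to L \to 1$ induces, for each $l/k$, an exact sequence of pointed sets $H^1(l, U) \to H^1(l, P) \to H^1(l, L)$; as the outer terms are trivial and exactness identifies the fibre of the right-hand map over the base point with the image of the left-hand map, we obtain $H^1(l, P) = \{1\}$. The one delicate point is the identification of the Levi---in particular verifying that its semisimple part is the simply connected group $\mathrm{SL}_2$ rather than $\mathrm{PGL}_2$, the latter being non-special; everything else is formal.
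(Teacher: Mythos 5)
Your overall strategy mirrors the paper's: both proofs rest on the Levi decomposition $1 \to R_u(P) \to P \to L \to 1$, specialness and rationality of the unipotent radical, and Hilbert 90 for the Levi. But the step you yourself flag as ``the one delicate point'' contains a genuine gap: the relation $\langle \beta, \beta^\vee \rangle = 2$ cannot decide between $\mathrm{SL}_2$ and $\mathrm{PGL}_2$, because it holds in \emph{every} reductive group --- in particular in $\mathrm{PGL}_2$ itself. Set $S = \beta^\vee(\GG_m)$. In the $\mathrm{SL}_2$ case, $\beta^\vee \colon \GG_m \to S$ is an isomorphism and $\beta|_S$ is twice a generator of $X^*(S)$; in the $\mathrm{PGL}_2$ case, $\beta^\vee$ is twice a generator of the cocharacter lattice, $\beta^\vee \colon \GG_m \to S$ is a degree-$2$ isogeny, and $\beta|_S$ is a generator of $X^*(S)$ --- yet in both cases $\beta(\beta^\vee(t)) = t^2$. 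So the statement ``$\beta$ restricts to twice a generator of $X^*(S)$'' is \emph{equivalent} to the dichotomy you are trying to resolve, and deducing it from $\langle \beta, \beta^\vee \rangle = 2$ is circular. Until this is repaired, the possibility $L \cong \mathrm{PGL}_2 \times \GG_m$ --- which is not special, since $H^1(l, \mathrm{PGL}_2)$ classifies quaternion algebras over $l$ --- has not been excluded, and the specialness claim for $P$ collapses.

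The gap is fixable, and the correct input is simple connectedness of $G_2$: since $X_*(T)$ is then the coroot lattice with the simple coroots as a basis, $\beta^\vee$ is primitive, i.e.\ $\beta^\vee \notin 2X_*(T)$; equivalently, some character of $T$ (the fundamental weight dual to $\beta^\vee$, which lies in $X^*(T)$ because for $G_2$ the weight and root lattices coincide) pairs with $\beta^\vee$ to $1$. This forces the homomorphism $\mathrm{SL}_2 \to L'$ to have trivial scheme-theoretic kernel, so $L' \cong \mathrm{SL}_2$. (Alternatively, quote the standard fact, due to Borel--Tits, that in a simply connected semisimple group the derived group of any Levi subgroup of a parabolic is again simply connected.) It is worth noting how the paper sidesteps this issue entirely: following Chevalley, it uses the $P$-stable isotropic flag $E_1 \subset E_3$ and the induced map $P \to \GL(E_3/E_1) \cong \GL_2$, which is a split surjection with unipotent kernel, so the Levi is $\GL_2$ \emph{by construction} and no coroot computation is needed. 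Apart from the gap, your write-up is sound, and in one respect cleaner than the paper's: using the split unipotent structure of $R_u(P)$ gives rationality and specialness of the radical over any base field directly, whereas the paper first reduces to the prime field in order to invoke rationality of smooth connected unipotent groups over perfect fields. (A minor slip: if $L \cong \mathrm{SL}_2 \times \GG_m$ it is not literally an open subvariety of an affine space, but it is still rational, so that conclusion survives.)
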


\begin{proof}
Since the split group of type $G_2$ is defined over the prime field, we may replace $k$ by the prime field for the purpose of proving this lemma, and in particular, we can assume $k$ is perfect. We begin by briefly recalling a construction of Chevalley \cite{chevalley}.  The isotropic line $E_1\subset V$ stabilized by $P$ is spanned by $f_1$, and $P$ also preserves an isotropic $3$-space $E_3$ spanned by $f_1,f_2,f_3$; see, e.g., \cite[\S2.2]{g2chern}.  There is a corresponding map $P \to \GL(E_3/E_1) \isom \GL_2$, which is a split surjection thanks to the block matrix described in \cite[p.~13]{heinloth} as the image of ``$B$'' in $\GL_7$.  
The kernel is unipotent, and we have a split exact sequence corresponding to the Levi decomposition:
\begin{equation} \label{e.parabolic}
  1 \to R_u(P) \to P \to \GL_2 \to 1.
\end{equation}
Combining the  exact sequence in cohomology induced by~\eqref{e.parabolic}  
with the fact that both $R_u(P)$ and $\GL_2$ are special 
(see \cite[pp. 122 and 128]{serre}), shows that $P$ is special.

Since $P$ is isomorphic to $R_u(P) \times \GL_2$ as a variety over $k$,
and $P$ is smooth, so is $R_u(P)$.  A smooth connected unipotent group over 
a perfect field is rational~\cite[IV, \S2(3.10)]{dg}; therefore $R_u(P)$ is $k$-rational, and so is $P$.
\end{proof}

%%%%%%%%%%%%%%%%%%%%%%%
\medskip
\noindent
{\bf IV. Proof of Theorem~\ref{t:main}.} 
Keep the notation of the previous section.  
By a {\it $W$-model} (of $k(\Qq)^T$), we mean a quasi-projective $k$-variety $Y$, endowed with a right action of $W$, together with a dominant $W$-equivariant $k$-rational map $\Qq \dasharrow Y$ which, on the level of function fields, identifies $k(Y)$ with $k(\Qq)^T$.  Such a map $\Qq \dasharrow Y$ is called a {\it ($W$-equivariant) rational quotient map}.  
A $W$-model is unique up to a $W$-equivariant 
birational isomorphism; we will construct an explicit one below. 

We reduce Theorem~\ref{t:main} to a statement about rationality of a twisted $W$-model, in two steps.  The first holds for general split connected semisimple groups $G$.

\begin{proposition} \label{prop.red1} 
Let $G$ be a split connected semisimple group over $k$, with split 
maximal $k$-torus $T$.  Let $K=k(\liet)^W$, $L=k(\liet)$, and let $\zeta$ be the $W$-torsor corresponding to the field extension $L/K$.  If the twisted variety ${ }^{\zeta} (G_K/T_K)$ 
is rational over $K$, 
then $k(\lieg)$ is purely transcendental over $k(\lieg)^G$.
\end{proposition}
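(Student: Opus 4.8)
The plan is to realize $k(\lieg)$ as the function field of the generic fibre of the adjoint quotient map, and to identify that fibre, as a $K$-variety, with the twist ${}^{\zeta}(G_K/T_K)$; rationality of the latter then immediately yields the desired pure transcendence. Concretely, I would first record the (characteristic-free) statement that the field of rational adjoint invariants is computed on the Cartan subalgebra, i.e.\ that restriction induces an isomorphism $k(\lieg)^G \isom k(\liet)^W = K$. This rests on two facts: a generic element of $\lieg$ is regular semisimple, hence $G(\bar k)$-conjugate into $\liet$, and two regular semisimple elements of $\liet$ are $G(\bar k)$-conjugate exactly when they lie in one $W$-orbit (the $W$-action being realized by $N(T)$). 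Thus the inclusion $\liet \hookrightarrow \lieg$ induces the adjoint quotient, a dominant rational map $\pi\colon \lieg \dasharrow \liet/\!/W = \Spec k[\liet]^W$, with $k(\liet/\!/W) = K = k(\lieg)^G$.

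Since $G$ is connected, $K$ is algebraically closed in $k(\lieg)$, so the generic fibre $\lieg_\eta := \lieg \times_{\liet/\!/W} \Spec K$ is a geometrically integral $K$-variety with function field $k(\lieg)$; that is, $k(\lieg) = K(\lieg_\eta)$. The heart of the argument is to show $\lieg_\eta$ is birationally $K$-isomorphic to ${}^{\zeta}(G_K/T_K)$. For this I would work over $L = k(\liet)$, where $\liet_L$ carries the tautological regular semisimple point $x \in \liet(L)$ coming from the identity map $\liet \to \liet$. Its centralizer in $G_L$ is $T_L$, so the orbit map $g \mapsto \mathrm{Ad}(g)\,x$ identifies $G_L/T_L$ with the $G_L$-orbit of $x$, which is dense in the fibre of $\pi$ over the image of $x$. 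As $x$ maps to the generic point of $\liet/\!/W$, this exhibits a birational $L$-isomorphism $(\lieg_\eta)_L \sim G_L/T_L$; in particular $\lieg_\eta$ is a $K$-form of $G/T$ split by the extension $L/K$, i.e.\ by the torsor $\zeta$.

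To pin down which form, I would trace the descent data. For $w \in W$ choose a representative $n_w \in N(T)$; then $\mathrm{Ad}(n_w)\,x = w\cdot x$, where $w\cdot x \in \liet(L)$ is the image of $x$ under the $W$-action on $L$ encoded by $\zeta$. Comparing the orbit identifications based at $x$ and at $w\cdot x = \mathrm{Ad}(n_w)x$ shows that the corresponding transition automorphism of $G_L/T_L$ is the right translation $gT \mapsto g\,n_w T$. This is precisely the right $W$-action on $G/T$ used to form ${}^{\zeta}(G_K/T_K)$, so the two forms agree and $\lieg_\eta \sim {}^{\zeta}(G_K/T_K)$ over $K$. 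Granting the hypothesis that ${}^{\zeta}(G_K/T_K)$ is $K$-rational, we conclude that $k(\lieg) = K(\lieg_\eta) = K\bigl({}^{\zeta}(G_K/T_K)\bigr)$ is purely transcendental over $K = k(\lieg)^G$, as claimed.

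I expect the main obstacle to be the second and third steps carried out uniformly in all characteristics: one must ensure that the orbit map $G_L/T_L \to \lieg_L$ is a birational (indeed locally closed) immersion even in bad characteristic, where separability of orbit maps and the precise computation of centralizers of regular semisimple elements require care, and one must verify that the descent data match the twisting convention for ${}^{\zeta}(G_K/T_K)$ rather than its inverse or a variant. The reduction $k(\lieg)^G = k(\liet)^W$ in positive characteristic is a further delicate point, though it concerns only generic, regular semisimple behaviour and so should follow from the density argument above.
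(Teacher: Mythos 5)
Your construction is, at bottom, the paper's own argument repackaged in the language of Galois descent: the orbit map $gT_L \mapsto \mathrm{Ad}(g)\,x$ through the tautological point $x \in \liet(L)$ is exactly the generic fibre of the paper's $(G\times W)$-equivariant map $f \colon G/T \times_{\Spec(k)} \liet \to \lieg$, $(\bar a, t) \mapsto \mathrm{Ad}(a)t$, and your matching of descent data (right translation by $n_w$) is the same bookkeeping by which the paper identifies $k(G/T \times \liet)^W$ with $K\bigl({}^\zeta(G_K/T_K)\bigr)$, which is just the definition of the twist. In characteristic zero your proof is correct, and every step you invoke (rational Chevalley restriction, $C_{G_L}(x)=T_L$, birationality of the orbit map onto a dense open subset of the fibre) is standard there.

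The genuine gap is in positive characteristic, which is precisely the point of this paper (the statement and Theorem~\ref{t:main} are over an arbitrary field; the characteristic-zero reductions were already in \cite{trans}). All the delicate points you flag --- geometric integrality of $\lieg_\eta$ over $K$, the scheme-theoretic equality $C_{G_L}(x) = T_L$, birationality (not mere bijectivity on geometric points) of the orbit map, and $k(\lieg)^G = k(\liet)^W$ --- are instances of a single separability statement, and your proposed resolution, that it ``should follow from the density argument,'' is not correct: density of regular semisimple elements gives bijectivity on $\bar k$-points, and in characteristic $p$ a dominant, generically injective map can be purely inseparable of degree $>1$ (Frobenius). A priori $k(\lieg)^G$ could then be a nontrivial purely inseparable extension of $K$, and the centralizer of $x$ could be a non-reduced group scheme $C \supsetneq T_L$, in which case the orbit is $G_L/C$ and $G_L/T_L \to (\lieg_\eta)_L$ is a purely inseparable cover rather than a birational isomorphism. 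The paper closes exactly this hole with its one non-formal computation: $f$ is smooth at a general point $(1,x)$, proved by showing $[x,\lieg] + \liet = \lieg$ for regular semisimple $x$ via linear independence of $\ad(x)$-eigenvectors; separability, together with the fact that a general fibre is a single free $W$-orbit, then yields \eqref{e.claim} by a degree count, and the Chevalley restriction $k(\lieg)^G = k(\liet)^W$ falls out by taking $G$-invariants --- it is an output of the paper's proof, not an input as in yours. To complete your argument in all characteristics you must add this differential/centralizer computation (for instance, verifying that no root differential vanishes identically on $\liet$, so that $\ker(\ad x) = \liet$ for generic $x$).
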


\begin{proof} 
Consider the $(G \times W)$-equivariant morphism
\[ f \colon  G/T \times_{\Spec(k)} \liet \to \lieg \] 
given by $(\overline{a}, t) \mapsto \mathrm{Ad}(a)t$, 
where $\liet$ is the Lie algebra of $T$,
$\overline{a} \in G/T$ is the class of $a \in G$, modulo $T$. 
Here $G$ acts on $G/T \times \liet$ by translations 
on the first factor (and trivially on $\liet$), and via the adjoint 
representation on $\lieg$.
The Weyl group $W$ naturally acts on $\liet$ and $G/T$ (on 
the right), diagonally on $G/T \times \liet$, and
trivially on $\lieg$.  

The image of $f$ contains the semisimple locus in $\lieg$, so 
$f$ is dominant and induces an inclusion $f^* \colon k(\lieg) \hookrightarrow k(G/T \times \liet)$. 
Clearly $f^* \, k(\lieg) \subset k(G/T \times \liet)^W$. 
We will show that in fact
\begin{equation} \label{e.claim}
f^* \, k(\lieg) = k(G/T \times \liet)^W \, .  
\end{equation}
Write $\overline{k}$ for an algebraic closure of $k$, 
and note that the preimage of a $\bar{k}$-point
of $\lieg$ in general position is a single $W$-orbit in
$(G/T \times \liet)_{\overline{k}}$. 
To establish \eqref{e.claim}, it remains to check that $f$ is smooth at a general point $(g, x)$ 
of $G/T \times \liet$.  (In particular, when $\Char(k)=0$ nothing more is needed.)  
To carry out this calculation, 
we may assume without loss of generality that $k$ is algebraically 
closed and (since $f$ is $G$-equivariant) $g = 1$.  Since 
$\dim(G/T \times \liet) = \dim(\lieg)$, it suffices to show
that the differential 
\[ 
 df \colon T_{(1, x)}(G/T \times \liet) \to T_x(\lieg) 
\]
is surjective, for any regular semisimple element $x\in\liet$.  
Equivalently, we want to show that
$[x, \lieg] + \liet = \lieg$.  Since $x$ is regular, we have 
$\dim([x,\lieg])+\dim\liet = \dim\lieg$. Thus it remains to show that  
$[x,\lieg] \cap \liet = 0$.  To see this, suppose $[x,y]\in \liet$ 
for some $y\in\lieg$. Since $x$ is semisimple, we can write
$y = \sum_{i = 1}^r y_{\lambda_i}$, where $y_{\lambda}$ is an
eigenvector for $\mathrm{ad}(x)$ with eigenvalue $\lambda$,
and $\lambda_1, \dots, \lambda_r$ are distinct. 
Then $[x, y] = \sum_{i = 1}^r  \lambda_i y_{\lambda_i} \in \liet$ 
is an eigenvector for $\ad(x)$ with eigenvalue $0$.  
Remembering that eigenvectors of $\ad(x)$ with distinct eigenvalues are
linearly independent, we conclude that $[x, y] = 0$. This
completes the proof of \eqref{e.claim}.

It is easy to see $k(G/T \times \liet)^{G \times W} = k(\liet)^W$.  Summarizing, $f^*$ induces a diagram
 \[ \xymatrix{k(G/T \times_{\Spec(k)} \liet)^W  & \ar@{-}[l]_<<<<<{\sim} k(\lieg) 
\\
 k(\liet)^W   \ar@{-}[u] & \ar@{-}[l]_{\sim} k(\lieg)^G , \ar@{-}[u]} \]
where the top row is the $G$-equivariant isomorphism~\eqref{e.claim}, 
and the bottom row is obtained from the top by taking $G$-invariants.  
Note that 
\[
 k(G/T \times_{\Spec(k)} \liet) \simeq 
K((G/T)_K \times_{\Spec(K)} \Spec{L}), 
\]
where $\simeq$ denotes a $G$-equivariant isomorphism of fields.
(Recall that $G$ acts trivially on $\liet$ and hence also on $L$ and $K$.)
Thus the field extension on the left side of our diagram
can be rewritten as $K(^\zeta (G_K/T_K))/K$, 
where $\zeta$ is the $W$-torsor $\Spec(L) \to \Spec(K)$.  By assumption, 
this field extension is purely transcendental; the diagram shows it is isomorphic to $k(\lieg)/k(\lieg)^G$.
\end{proof}

For the second reduction, we return to the assumptions of Section III.
\begin{proposition}\label{prop.red1b}
Let $G$ be a split simple group of type $G_2$, with maximal torus $T$ and Weyl group $W$, and let $\Qq$ be the quadric defined in Section III.  Suppose that 
for a given $W$-model $Y$ of $k(\Qq)^T$, and for some $W$-torsor $\zeta$ over some field $K/k$, the twisted variety ${ }^{\zeta} (Y_K)$ is rational over $K$.  Then the twisted variety ${}^{\zeta}(G_K/T_K)$ is rational over $K$.
\end{proposition}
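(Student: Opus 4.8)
The plan is to realize ${}^{\zeta}(G_K/T_K)$ as a torsor under the special group $P$ over the twisted model ${}^{\zeta}(Y_K)$, and then to invoke Lemma~\ref{l:special} twice: once to split this torsor generically, and once to identify its fibre with affine space. First I would reduce to a convenient model. Since a $W$-model of $k(\Qq)^T$ is unique up to $W$-equivariant birational isomorphism and twisting carries a $W$-equivariant rational map to a rational map, I may replace $Y$ by the rational quotient $Y_0 := T\backslash\Qq$, equipped with the $W$-action induced from the left $N(T)$-action on $\Qq \isom G/P$. Then $k(Y_0)=k(\Qq)^T$, so ${}^{\zeta}(Y_K)$ is $K$-birational to ${}^{\zeta}((Y_0)_K)$, and the latter is rational by hypothesis. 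Using the isomorphism $G/T \isom T\backslash G$, $gT\mapsto Tg^{-1}$, which exchanges the right $W$-action on $G/T$ for a left $W$-action on $T\backslash G$ (a change of handedness that does not affect the twisting construction), it suffices to exhibit $T\backslash G$ as a $W$-equivariant $P$-torsor over $Y_0$.

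The geometric heart of the argument, and the step I expect to be the main obstacle, is the construction of this torsor. The quotient $G\to G/P=\Qq$ is a right $P$-torsor, Zariski-locally trivial because $P$ is special; it is equivariant for the commuting left action of $N(T)$, and in particular of $T$. A short weight computation on the seven coordinates in~\eqref{e.T} shows that $T$ acts with trivial stabilizer on a dense open subset of $\Qq$ (for a point with all coordinates nonzero, the trivial character on $f_4$ forces the eigenvalue to be $1$, and then $t_1=t_2=1$). Passing to left-$T$-quotients therefore yields, over a dense open $V\subset Y_0$, a right $P$-torsor $T\backslash G \to Y_0$. Crucially, the left $N(T)$-action commutes with the right $P$-action and descends to $W=N(T)/T$ on both $T\backslash G$ and $Y_0$, so this is a $W$-equivariant $P$-torsor on which $W$ acts \emph{trivially} on the structure group $P$. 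The conceptual point is that incorporating the $T$-quotient into the base converts the naive and \emph{non}-$W$-equivariant bundle $G/T\to\Qq$ with fibre $P/T$ into an honest $W$-equivariant $P$-torsor; the triviality of the $W$-action on $P$ is exactly what will let specialness be applied after twisting.

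It then remains to twist and conclude. Applying the functor ${}^{\zeta}(\,\cdot\,)$ and using that $W$ acts trivially on $P$, the $P$-torsor structure is preserved: the induced morphism ${}^{\zeta}(T\backslash G)_K \to {}^{\zeta}((Y_0)_K)$ is a $P_K$-torsor over a dense open of the base. Write $F=K({}^{\zeta}Y_0)$ for the function field of ${}^{\zeta}((Y_0)_K)$; by hypothesis $F$ is purely transcendental over $K$. The generic fibre is a $P$-torsor over $F$, so $H^1(F,P)=\{1\}$ from Lemma~\ref{l:special} forces it to be trivial, whence $K\bigl({}^{\zeta}(T\backslash G)\bigr)=F(P_F)$.

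Finally, $P$ is $k$-rational by Lemma~\ref{l:special}, so $P_F$ is $F$-rational and $F(P_F)=F(x_1,\dots,x_{\dim P})$ is purely transcendental over $F$, hence over $K$. Therefore ${}^{\zeta}(G_K/T_K)\isom {}^{\zeta}(T\backslash G)_K$ is rational over $K$, as desired. Everything after the construction of the $W$-equivariant $P$-torsor is formal; the only delicate points are the left/right bookkeeping of the $W$-actions and the verification that $W$ acts trivially on the structure group, so that twisting keeps the object a $P$-torsor over the rational base ${}^{\zeta}((Y_0)_K)$.
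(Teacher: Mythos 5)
Your proposal is correct and takes essentially the same approach as the paper: realize (a handedness-flipped version of) $G/T$ as a $W$-equivariant, generically defined $P$-torsor over the $W$-model with $W$ acting trivially on $P$, twist by $\zeta$, split the resulting torsor using specialness of $P$ (Lemma~\ref{l:special}), and conclude from the $k$-rationality of $P$ and the $K$-rationality of the twisted base. The only real difference is how the generic torsor structure is obtained --- the paper shows the left $P$-action on $G/T$ is generically free (reducing, via triviality of the center of $G$, to the same weight computation for the $T$-action on $\Qq$ that you perform) and cites \cite[Theorem 4.7]{bf}, whereas you descend the torsor $G \to \Qq$ along the $T$-quotient --- which is a cosmetic variation, not a different argument.
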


\begin{proof}
For the purpose of this proof, we may view $K$ as a new base field and replace it with $k$. 

We claim that the left action of $P$ on $G/T$ is generically free.  
Since $G$ has trivial center, the (characteristic-free) argument at the beginning of the proof of~\cite[Lemma 9.1]{trans} shows that in order to establish this 
claim it suffices to show that the right $T$-action on  
$\Qq = P \backslash G$ is generically free. The latter action, 
given by restricting the linear action~\eqref{e.T}
of $T$ on $\PP^6$ to the quadric $\Qq$ given by~\eqref{e.Q},  
is clearly generically free.

Let $Y$ be a $W$-model. The $W$-equivariant rational map 
$G/T  \dasharrow Y$ induced by the projection 
$G \to P \backslash G = \Qq$
is a rational quotient map for the left $P$-action on $G/T$;
cf.~\cite[p. 458]{trans}. Since the $P$-action is generically free, this map is 
a $P$-torsor over the generic point of $Y$; see~\cite[Theorem 4.7]{bf}.
By the functoriality of the twisting operation, after twisting
by a $W$-torsor $\zeta$, we obtain a rational map 
${ }^\zeta (G/T)  \dasharrow { }^\zeta Y$, which is a
$P$-torsor over the generic point of
${ }^\zeta Y$. This torsor has a rational section, since $P$ is 
special; see Lemma~\ref{l:special}.  In particular,
${ }^\zeta (G/T)$ is $k$-birationally isomorphic to $P \times  { }^\zeta Y$.  
Since $P$ is $k$-rational (once again, by Lemma~\ref{l:special}), 
${ }^\zeta (G/T)$ is rational over ${ }^\zeta Y$. Since ${ }^\zeta Y$ is rational over $k$, we conclude that so is
${ }^\zeta (G/T)$, as desired. 
\end{proof}

It remains to show that the hypothesis of Proposition~\ref{prop.red1b} holds.  
As before, we may replace the field $K$ with $k$. The following lemma completes the proof of Theorem~\ref{t:main}.

\begin{lemma} \label{lem.red2} 
Let $Y$ be a $W$-model. The twisted variety ${ }^{\zeta} Y$ is rational over $k$, 
for every $W$-torsor $\zeta$ over $k$.
\end{lemma}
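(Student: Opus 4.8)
The plan is to make the $W$-model completely explicit, to identify the twist ${}^{\zeta}Y$ with a conic bundle over an affine plane, and then to prove rationality by slicing that bundle into del Pezzo surfaces of degree $5$.

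First I would compute $k(\Qq)^T$ by hand. In the affine chart $X_4=1$, writing $Y_i=X_i/X_4$, the products $p_1=Y_1Y_7$, $p_2=Y_2Y_6$, $p_3=Y_3Y_5$ and the two ``triangle'' monomials $\sigma_1=Y_1Y_5Y_6$, $\sigma_2=Y_2Y_3Y_7$ are $T$-invariant, and one checks they generate $k(\Qq)^T$ subject only to the relations $p_1+p_2+p_3=-1$ (the quadric equation) and $\sigma_1\sigma_2=p_1p_2p_3$. Reading the $W$-action off Section III, the subgroup $\Sym_3$ permutes $p_1,p_2,p_3$ and fixes each $\sigma_j$ (each $\sigma_j$ is a symmetric product of one coordinate from every pair), while the central $\Sym_2$ fixes the $p_i$ and interchanges $\sigma_1\leftrightarrow\sigma_2$. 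This produces a concrete $W$-model $Y$.

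Since $W=\Sym_3\times\Sym_2$ is a direct product, a $W$-torsor $\zeta$ is a pair $(\zeta_3,\zeta_2)$ corresponding to a cubic étale algebra $E$ and a quadratic étale algebra $F$. By functoriality of twisting and the description of the action, the twisting functor turns the permutation coordinates $(p_1,p_2,p_3)$ into a generic $\xi\in E$ (so $p_1+p_2+p_3\mapsto \tr_{E/k}\xi$ and $p_1p_2p_3\mapsto N_{E/k}(\xi)$) and the swapped pair $(\sigma_1,\sigma_2)$ into $z\in F$ (so $\sigma_1\sigma_2\mapsto N_{F/k}(z)$). Hence ${}^{\zeta}Y$ is $k$-birational to
\[ \mathcal{V}=\{\, (\xi,z) : \xi\in E,\ \tr_{E/k}(\xi)=-1,\ z\in F,\ N_{F/k}(z)=N_{E/k}(\xi)\,\}, \]
a conic bundle over the affine plane $H=\{\tr_{E/k}\xi=-1\}\isom\AA^2$ whose discriminant is the cubic $\{N_{E/k}(\xi)=0\}$. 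When $F$ is split this conic is hyperbolic and has an obvious section, so $\mathcal{V}$ is rational. When $F$ is a field the bundle has no rational section (one sees this from the ramification of the symbol associated with $N_{E/k}(\xi)$), so the crucial work is here.

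To handle the field case I would slice $H\isom\AA^2$ by a general pencil of lines $\{L_t\}_{t\in\PP^1}$. The restriction $\mathcal{V}|_{L_t}$ is a conic bundle over $L_t\isom\PP^1$ whose three degenerate fibres lie over $L_t\cap\{N_{E/k}=0\}$; thus $K^2=8-3=5$ and for general $t$ it is a smooth del Pezzo surface of degree $5$ over $k(t)$. Invoking the classical theorem that every del Pezzo surface of degree $5$ over an arbitrary field carries a rational point and is rational, the generic fibre of $\mathcal{V}\dasharrow\PP^1$ is $k(t)$-rational; as the base is rational, $\mathcal{V}$ is rational over $k$. The main obstacle is precisely this last step: the twisted bundle is a \emph{nontrivial} conic bundle, so its rationality is not formal and rests entirely on the degree-$5$ del Pezzo theorem, which must be used in a characteristic-free form. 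A secondary technical point is to choose the pencil so that $N_{E/k}$ restricts to a separable cubic on $L_t$ (ensuring smooth del Pezzo fibres), which needs a little care in characteristics $2$ and $3$; the explicit identification of ${}^{\zeta}Y$ with $\mathcal{V}$ in the second step is routine but must be carried out compatibly with the twisting functor.
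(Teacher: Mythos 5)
Your construction of the $W$-model and your description of the twist are correct, and in fact your $(p_i,\sigma_j)$ are exactly the paper's coordinates $(y_i,z_j)$ on its model $\Lambda_1=\{y_1+y_2+y_3+1=0,\ y_1y_2y_3=z_1z_2\}$; rewriting ${}^\zeta Y$ as the norm--trace variety $\mathcal{V}=\{\tr_{E/k}(\xi)=-1,\ N_{F/k}(z)=N_{E/k}(\xi)\}$ is a legitimate (and attractive) way to make the twist explicit, and the split-$F$ case is fine. The proof collapses, however, at the del Pezzo step, and the error is concrete: you miscounted the degenerate fibres. Parametrize a line $L\subset H$ by $\xi(s)=\xi_0+s\eta$ with $\tr_{E/k}(\eta)=0$; then $g(s)=N_{E/k}(\xi(s))$ is a cubic with leading coefficient $N_{E/k}(\eta)\neq 0$. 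Because $g$ has \emph{odd} degree, the smooth projective model of $\{N_{F/k}(z)=g(s)\}$ as a conic bundle over $\PP^1$ degenerates not only over the three roots of $g$ but also over $s=\infty$: in the chart $u=1/s$, after substituting $\zeta=u^2z$ the equation becomes $N_F(\zeta)=u\,\tilde g(u)\,\omega^2$ with $\tilde g(0)\neq 0$, and the fibre at $u=0$ is the pair of conjugate lines $N_F(\zeta)=0$. Equivalently, over the projective plane $\bar{H}$ the discriminant of the conic bundle is the cubic $\{N_E=0\}$ \emph{plus the line at infinity}, a quartic, so a general line meets it in four points. Your slice therefore has $K^2=8-4=4$; it is a Ch\^atelet surface $N_F(z)=g(s)$, not a degree-$5$ del Pezzo surface.

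This is not a repairable slip in the strategy. When $F$ is a field the two components of every degenerate fibre are interchanged by $\mathrm{Gal}(F/k)$, so the sliced conic bundle is relatively minimal, and relatively minimal conic bundles with four degenerate fibres are in general \emph{not} rational over the base field (Iskovskikh's rationality criterion requires $K^2\geq 5$; the non-rational Ch\^atelet surfaces of Iskovskikh and of Colliot-Th\'el\`ene--Sansuc--Swinnerton-Dyer are exactly of this shape). Indeed, if your argument were correct it would prove that all Ch\^atelet surfaces are rational, which is false; and rationality of the threefold $\mathcal{V}$ cannot be detected on a pencil of surface slices, since rationality of a total space does not imply rationality of the generic member. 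The paper's proof avoids fibrations entirely: after passing to the projective closure and eliminating one variable, it applies the Cremona involution $Y_i\mapsto 1/Y_i$, $Z_j\mapsto 1/Z_j$, which converts the cubic model into the smooth quadric threefold $\Lambda_4=\{Z_1Z_2+Y_1Y_2+Y_1Y_3+Y_2Y_3=0\}\subset\PP^4$. Since the $W$-action on $\PP^4$ is linear, Hilbert~90 shows ${}^\zeta\Lambda_4$ is again a quadric over $k$; the $W$-invariant zero-cycle $(1{:}0{:}0{:}0{:}0)+(0{:}1{:}0{:}0{:}0)+(0{:}0{:}1{:}0{:}0)$ of degree $3$ descends to the twist, and Springer's theorem yields a $k$-point, hence rationality. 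In your language: one more birational move turns $\mathcal{V}$ into the quadric $\{N_{F/k}(z)+c_2(\xi)=0\}\subset\PP(F\oplus E)$ (with $c_2$ the quadratic coefficient of the characteristic polynomial on $E$), and the arithmetic input is Springer's theorem, not the degree-$5$ del Pezzo theorem.
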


\begin{proof}
We begin by constructing an explicit $W$-model.  The affine open subset
$\Qq^{\rm aff} = \{x_1 x_7 + x_2 x_6 + x_3 x_5 + 1 =0 \} \subset \AA^6$ 
(where $X_4\neq0$) is $N(T)$-invariant. 
Here the affine coordinates on $\AA^6$ are $x_i := X_i/X_4$, for $i \neq 4$. 
The field of rational functions invariant for the $T$-action 
on $\Qq^{\rm aff}$ is $k(y_1, y_2, y_3,z_1,z_2)$, where the variables
\begin{align*}
 y_1 = x_1 x_7,\quad y_2 = x_2 x_6,\quad y_3 = x_3 x_5,\quad z_1 = x_1 x_5 x_6,\;\text{ and }\; z_2 = x_2 x_3 x_7
\end{align*}
are subject to the relations 
$y_1 + y_2 + y_3 + 1=0$ and $y_1 y_2 y_3 = z_1 z_2$.  
Thus we may choose as a $W$-model the affine subvariety $\Lambda_1$ of $\AA^5$
given by these two equations, where $W = \Sym_2 \times \Sym_3$ acts on the coordinates as follows: 
$\Sym_2$ permutes $z_1, z_2$, and $\Sym_3$ permutes $y_1, y_2, y_3$.  (Recall the $W$-action defined in Section III, and note that the field $k(\Qq)$ is recovered by adjoining the classes of variables $x_1$ and $x_2$.)  
We claim that $\Lambda_1$ is $W$-equivariantly birationally isomorphic to
\[ \begin{array}{l} 
\Lambda_2 = \{ (Y_1:Y_2:Y_3: Z_0: Z_1: Z_2) \, : \, \text{$Y_1 + Y_2 + Y_3 +Z_0=0$ and $Y_1 Y_2 Y_3 = Z_1 Z_2 Z_0$} \} \subset \PP^5, \\
\Lambda_3 = \{ (Y_1:Y_2:Y_3: Z_1: Z_2) \, : \,  Y_1 Y_2 Y_3 + (Y_1 + Y_2 + Y_3) Z_1 Z_2=0 \} \subset \PP^4, \; \text{and} \\
\Lambda_4 = \{ (Y_1:Y_2:Y_3: Z_1: Z_2) \, : \, Z_1 Z_2 +Y_2 Y_3 + Y_1 Y_3 + Y_1 Y_2=0  \} \subset \PP^4 \, ,
\end{array} \]
where $W$ acts on the projective coordinates $Y_1, Y_2, Y_3, Z_1, Z_2, Z_0$ as follows: 
$\Sym_2$ permutes $Z_1, Z_2$, $\Sym_3$ permutes $Y_1, Y_2, Y_3$, 
and every element of $W$ fixes $Z_0$.
Note that $\Lambda_2 \subset \PP^5$ is the projective closure of $\Lambda_1 \subset \AA^5$; hence, using $\simeq$ to denote $W$-equivariant birational equivalence, we have $\Lambda_1 \simeq \Lambda_2$.
The isomorphism $\Lambda_2 \simeq \Lambda_3$ is obtained by 
eliminating $Z_0$ from the 
system of equations defining $\Lambda_2$.
Finally, the isomorphism $\Lambda_3 \simeq \Lambda_4$ comes from the Cremona transformation $\PP^4 \dasharrow \PP^4$ given by 
$Y_i \to 1/Y_i$ and $Z_j \to 1/Z_j$ for $i = 1, 2, 3$ and $j = 1, 2$.

Let $\zeta$ be a $W$-torsor over $k$. It remains to be shown that  ${ }^\zeta \Lambda_4$ 
is $k$-rational.
Since $\Lambda_4$ is a $W$-equivariant quadric 
hypersurface in $\PP^4$, and the $W$-action on $\PP^4$ 
is induced by a linear representation $W \to \GL_5$,
Hilbert's Theorem 90 tells us that ${ }^\zeta \PP^4$ is $k$-isomorphic to $\PP^4$, 
and ${ }^\zeta \Lambda_4$ is isomorphic to a quadric hypersurface in $\PP^4$
defined over $k$; see~\cite[Lemma 10.1]{dr}. It is easily checked 
that $\Lambda_4$ is smooth over $k$, and therefore so is ${ }^\zeta \Lambda_4$. The zero-cycle of degree 3 given by   
$(1:0:0:0:0)+(0:1:0:0:0)+(0:0:1:0:0)$ in $\Lambda_4$ is $W$-invariant, 
so it defines a zero-cycle of degree 3 in ${ }^\zeta \Lambda_4$. 
By Springer's theorem, the smooth quadric ${ }^\zeta \Lambda_4$ has 
a $k$-rational point, hence is $k$-rational. 
\end{proof}

%%%%%%%%%%%%%%%%%%%%%%%%%%%%%%%%%%%%%%%%
\noindent
{\bf Acknowledgement.} 
We are grateful to J.-L.~Colliot-Th\'el\`ene for stimulating conversations.

%%%%%%%%%%%%%%%%%%%%%%%%%%%%%%%%%%%%%%%%

%%%%%%%%%%%%%%%%%%%%%%%%%%%%%%%%%%%%%%%%

%%%%%%%%%%%%%%%%%%%%%%%%%%%%%%%%%%%%%%%%
\end{document}